\newtheorem{theorem}{Theorem}[section]
\newtheorem*{theorem*}{Theorem}
\newtheorem*{lemma*}{Lemma}
\newtheorem*{remark*}{Remark}
\newtheorem*{definition*}{Definition}
\newtheorem*{proposition*}{Proposition}
\newtheorem*{corollary*}{Corollary}
\numberwithin{equation}{section}
\let\ced=\c         
\def\qed{\,\unskip\kern 6pt \penalty 500
\raise -2pt\hbox{\vrule \vbox to8pt{\hrule width 6pt
\vfill\hrule}\vrule}\par}
\definecolor{darkblue}{rgb}{0.05, .05, .65}
\definecolor{darkgreen}{rgb}{0.1, .65, .1}
\definecolor{darkred}{rgb}{0.8,0,0}
\newcommand{\beqn}{\begin{equation}}
\newcommand{\eeqn}{\end{equation}}
\newcommand{\bear}{\begin{eqnarray}}
\newcommand{\eear}{\end{eqnarray}}
\newcommand{\bean}{\begin{eqnarray*}}
\newcommand{\eean}{\end{eqnarray*}}
\begin{document}

\title{\huge \bf Sharp non-existence threshold for a parabolic Hardy-H\'enon equation with quasilinear diffusion}

\author{
\Large Razvan Gabriel Iagar\,\footnote{Departamento de Matem\'{a}tica
Aplicada, Ciencia e Ingenieria de los Materiales y Tecnologia
Electr\'onica, Universidad Rey Juan Carlos, M\'{o}stoles,
28933, Madrid, Spain, \textit{e-mail:} razvan.iagar@urjc.es}
\and \Large Philippe Lauren\ced{c}ot\,\footnote{Laboratoire de Math\'ematiques (LAMA) UMR 5217, Universit\'e Savoie Mont Blanc, CNRS, F-73000, Chamb\'ery, France. \textit{e-mail:} philippe.laurencot@univ-smb.fr}\\ [4pt] }
\date{\today}
\maketitle

\begin{abstract}
Optimal conditions for initial data leading to non-existence of non-negative solutions to the Cauchy problem for the parabolic Hardy-H\'enon equation
$$
\partial_tu=\Delta u^m+|x|^{\sigma}u^p, \quad (t,x)\in(0,\infty)\times\mathbb{R}^N,
$$
with $m>0$, $\sigma>0$ and $p>\max\{1,m\}$, are identified. Assuming that the initial condition satisfies
$$
u_0\in L^{\infty}(\mathbb{R}^N), \quad \lim\limits_{|x|\to\infty}|x|^{\gamma}u_0(x)=L\in(0,\infty), \quad u_0\geq0,
$$
it is shown that non-existence of solution occurs for
$$
\gamma<\frac{\sigma+2}{p-m} - \frac{2\max{\{p-p_G,0\}}}{(p-1)(p-m)}
$$
with
$$
p_G:=1+\frac{\sigma(1-m)}{2}.
$$
The above threshold for non-existence is optimal, in view of the existence of self-similar solutions for the limiting value of $\gamma$.
\end{abstract}

\bigskip

\noindent {\bf MSC Subject Classification 2020:} 35A01, 35B33, 35B45, 35K65, 35K67.

\smallskip

\noindent {\bf Keywords and phrases:} Hardy-H\'enon equation, non-existence of solutions, weighted estimates, optimal tail.

\section{Introduction and results}\label{sec.int}

In this note we consider the following parabolic Hardy-H\'enon equation with quasilinear diffusion
\begin{equation}\label{eq1}
	\partial_t u=\Delta u^m+|x|^{\sigma}u^p, \quad (t,x)\in(0,\infty)\times\mathbb{R}^N,
\end{equation}
with exponents $m>0$, $p>\max\{1,m\}$ and $\sigma>0$. Note that the previous equation includes both the degenerate diffusion range $m>1$ and the singular diffusion range $m\in(0,1)$, known in literature as the \emph{slow diffusion} range and the \emph{fast diffusion} range, respectively, as well as the semilinear case $m=1$. Eq.~\eqref{eq1} is supplemented with an initial condition
\begin{equation}\label{ic}
	u(x,0)=u_0(x), \quad x\in\mathbb{R}^N,
\end{equation}
which is assumed to satisfy the following conditions
\begin{equation}\label{icond}
	u_0\in L^{\infty}(\mathbb{R}^N), \quad u_0\geq0, \quad \lim\limits_{|x|\to\infty}|x|^{\gamma}u_0(x)=L\in(0,\infty),
\end{equation}
for some $\gamma>0$. The goal of the present work is to identify a range of values of $\gamma$ for which the non-existence of solutions to the Cauchy problem~\eqref{eq1}-\eqref{ic} holds true.

Eq.~\eqref{eq1} features a competition between a diffusion term (which can be either degenerate or singular) and a source term involving an unbounded weight as $|x|\to\infty$. Equations in this form are usually referred to in the literature as parabolic \emph{Hardy-H\'enon equations}. This terminology stems on the one hand from H\'enon's paper \cite[Eq.~(A.6)]{He73}, where the elliptic counterpart of Eq.~\eqref{eq1} is proposed as a model for studying rotating stellar systems, and on the other hand from the paper \cite{BG84}, where a linear parabolic equation with a source term weighted by the singular potential $C|x|^{-2}$, $C>0$, is considered. It is then proved in the latter work that the optimal constant of the Hardy inequality is the threshold between local existence and non-existence (in the form of an instantaneous blow-up) of solutions. Thus, the names of Hardy equation, respectively H\'enon equation, are by now commonly used to refer to equations featuring potentials $|x|^{\sigma}$ with $\sigma<0$ and $\sigma>0$, respectively. According to this discussion, we are concerned in this paper with the parabolic H\'enon equation, although the main estimates can be performed for negative values of $\sigma>-\min\{2,N\}$ as well.

Several works on parabolic Hardy-H\'enon equations have been published in recent years, emphasizing on (sometimes) unexpected properties of their solutions and the effect of the unbounded weight on their regularity, large time behavior, formation of finite time blow-up and other mathematical features. In the semilinear case $m=1$, the theory of the parabolic Hardy-H\'enon equation is developed in papers such as \cite{BSTW17, CIT22, CITT24} (see also references therein), where qualitative properties of solutions with initial conditions in optimal spaces are established.

A number of recent works by one of the authors and his collaborators classified self-similar solutions to Eq.~\eqref{eq1} with different forms and in different ranges of exponents. To name but a few of these works that provide self-similar solutions with the borderline behavior as $|x|\to\infty$, we quote \cite{ILS24b, IS25} for the degenerate diffusion $m>1$ and \cite{IMS25, IS26} for the singular diffusion $m<1$. It has been noticed therein that the critical exponent
\begin{equation}\label{crit.pG}
	p_G:=1+\frac{\sigma(1-m)}{2}
\end{equation}
has a significant influence on the ranges of existence and form of the self-similar solutions to Eq.~\eqref{eq1}. Specifically, in the slow diffusion range, backward self-similar solutions (presenting finite time blow-up) exist when $m>1$ and $p>p_G$, a condition which is always fulfilled here as $\sigma>0$ \cite{ILS24b, IS25}. The picture is far more complex in the fast diffusion range $0<m<1$, where ranges of non-existence of self-similar solutions appear and backward self-similar solutions presenting finite time extinction (instead of finite time blow-up) may also exist when $p\in(1,p_G)$ \cite{IMS25}. We refrain from describing in detail these intervals, but $p_G$ appears as a borderline exponent as well when $m\in(0,1)$. These striking properties are due to the combined effect of the quasilinear diffusion and the weighted source term, since in absence of any of these two features (that is, when either $m=1$ or $\sigma=0$), this specific exponent is just equal to one. It is also worth mentioning here that the critical exponent $p_G$ plays an important role when $\sigma\in(-\min\{2,N\},0)$, both in the classification of self-similar solutions to~\eqref{eq1} \cite{ILS24b,IS25,IMS25} and in the study of the Cauchy problem~\eqref{eq1}-\eqref{ic} \cite{IL25}.

\medskip

\noindent \textbf{Main result.} As commented in the previous discussion, our goal is to determine a sharp range of values of $\gamma>0$ such that the Cauchy problem \eqref{eq1}-\eqref{ic}, with an initial condition $u_0$ satisfying the conditions in \eqref{icond}, has no solution. When it holds true, this lack of existence (even for a short time interval) of a solution to this problem is due to an instantaneous blow-up, determined by the sufficiently slow decay of the initial condition as $|x|\to\infty$ and the very strong effect of the source term weighted by $|x|^{\sigma}$ for large values of $x$. The next theorem identifies thus the threshold exponent $\gamma$ in all cases.

\begin{theorem}\label{th.nonex}
Let $m\in(0,\infty)$, $p>\max\{1,m\}$ and $\sigma>0$ and let $u_0$ be a function satisfying \eqref{icond}. Then, if either
\begin{equation}\label{cond1}
p\geq p_G \quad {\rm and} \quad \gamma<\frac{\sigma}{p-1} = \frac{\sigma+2}{p-m} - \frac{2(p-p_G)}{(p-1)(p-m)},
\end{equation}
or
\begin{equation}\label{cond2}
1<p<p_G \quad {\rm and} \quad \gamma<\frac{\sigma+2}{p-m},
\end{equation}
the Cauchy problem \eqref{eq1}-\eqref{ic} has no solution. Moreover, there is $L_0\in(0,\infty)$ such that the Cauchy problem~\eqref{eq1}-\eqref{ic} has no solution if
\begin{equation}\label{cond3}
1<p<p_G, \quad \gamma=\frac{\sigma+2}{p-m} \quad {\rm and} \quad L>L_0.
\end{equation}
\end{theorem}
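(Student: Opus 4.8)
The plan is to argue by contradiction and prove instantaneous blow-up. Assume a non-negative solution $u$ of \eqref{eq1}-\eqref{ic} exists on some interval $[0,\tau]$ with $\tau>0$. I would localise a Kaplan-type eigenfunction argument on a ball $B_R$ sitting at distance of order $R$ from the origin --- where the weight $|x|^{\sigma}$ is of order $R^{\sigma}$ while, by \eqref{icond}, $u_0$ is still of order $R^{-\gamma}$ --- and show that a weighted average of $u$ over $B_R$ must become infinite by a time $T_R$ with $T_R\to0$ as $R\to\infty$. Picking $R$ so large that $T_R<\tau$ contradicts the existence of $u$ on $[0,\tau]$, and since $\tau>0$ is arbitrary, the Cauchy problem admits no solution.

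In detail, for $R$ large pick $x_R$ with $|x_R|=3R/2$ and set $B_R:=B(x_R,R/2)$, so that $R\le|x|\le 2R$ on $B_R$, hence $|x|^{\sigma}\ge R^{\sigma}$ there; moreover \eqref{icond} gives $u_0\ge\mu_R:=c_0R^{-\gamma}$ on $B_R$ for all $R$ large, with $c_0\in(0,\infty)$ a fixed multiple of $L$. Let $\varphi_R>0$ be the first Dirichlet eigenfunction of $-\Delta$ on $B_R$, normalised by $\int_{B_R}\varphi_R\,dx=1$, with eigenvalue $\lambda_R=4\lambda_1R^{-2}$, and set $Y_R(t):=\int_{B_R}u(t)\varphi_R\,dx$. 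Testing \eqref{eq1} against $\varphi_R$ (extended by zero; the term $\int_{B_R}\Delta(u^m)\varphi_R$ equals $-\lambda_R\int_{B_R}u^m\varphi_R$ plus a non-negative boundary contribution, since $\varphi_R=0$, $\partial_\nu\varphi_R\le0$ on $\partial B_R$ and $u\ge0$), then using Hölder's and Jensen's inequalities with respect to the probability measure $\varphi_R\,dx$ (here the hypothesis $m<p$ is used), one gets, while $u$ exists,
\begin{equation*}
Y_R'\ \ge\ -\lambda_R\Big(\int_{B_R}u^p\varphi_R\,dx\Big)^{m/p}+R^{\sigma}\int_{B_R}u^p\varphi_R\,dx,\qquad \int_{B_R}u^p\varphi_R\,dx\ \ge\ Y_R^p .
\end{equation*}
Consequently, if $R^{\sigma+2}\mu_R^{p-m}\ge 8\lambda_1$ then $Y_R(0)\ge\mu_R\ge(2\lambda_RR^{-\sigma})^{1/(p-m)}$, and as the right-hand side of the differential inequality is positive once $Y_R$ exceeds that level, $Y_R$ stays above it and $Y_R'\ge\tfrac12 R^{\sigma}Y_R^p$ holds throughout; an elementary ODE comparison then prevents $Y_R$ from remaining finite beyond $T_R:=2\big((p-1)R^{\sigma}\mu_R^{p-1}\big)^{-1}$.

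It remains to check that the condition $R^{\sigma+2}\mu_R^{p-m}\ge8\lambda_1$ and the decay $T_R\to0$ can be satisfied simultaneously, and exactly in the asserted ranges. With $\mu_R=c_0R^{-\gamma}$ one has $R^{\sigma+2}\mu_R^{p-m}=c_0^{p-m}R^{\sigma+2-\gamma(p-m)}$ and $T_R=2(p-1)^{-1}c_0^{-(p-1)}R^{\gamma(p-1)-\sigma}$; hence $T_R\to0$ iff $\gamma<\sigma/(p-1)$, while $R^{\sigma+2}\mu_R^{p-m}\to\infty$ iff $\gamma<(\sigma+2)/(p-m)$. If $p\ge p_G$ then $\sigma/(p-1)\le(\sigma+2)/(p-m)$, so \eqref{cond1} delivers both; if $1<p<p_G$ then $(\sigma+2)/(p-m)<\sigma/(p-1)$, so \eqref{cond2} delivers both. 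In the borderline case \eqref{cond3}, $\gamma=(\sigma+2)/(p-m)$ makes $R^{\sigma+2}\mu_R^{p-m}=c_0^{p-m}$ independent of $R$, so the condition reduces to $c_0^{p-m}\ge8\lambda_1$, i.e.\ $L\ge L_0$ for an explicit $L_0=L_0(N,m,p,\sigma)\in(0,\infty)$ (optimising over the radius of $B_R$ would sharpen $L_0$), whereas $\gamma(p-1)<\sigma$ still holds because $p<p_G$, so $T_R\to0$ as before. In every case, choosing $R$ with $T_R<\tau$ produces the contradiction.

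I expect the crux to be the geometric calibration of $B_R$: it must have radius of order $R$ (so the diffusive length scale is as large as possible relative to the reaction) and be centred at distance of order $R$ from the origin (so the weight there is of order $R^{\sigma}$ and the data of order $R^{-\gamma}$). This calibration is precisely what turns the two elementary requirements $T_R\to0$ and $R^{\sigma+2}\mu_R^{p-m}\to\infty$ into the two constituents of the threshold $\min\{\sigma/(p-1),\,(\sigma+2)/(p-m)\}$, and it also explains why the borderline case retains an $R$-independent smallness parameter governed by $L$, hence the restriction $L>L_0$ in \eqref{cond3}. The remaining ingredients --- justifying the weighted identity for the relevant solution class and the sign of the boundary term, the Hölder--Jensen step, and the ODE comparison --- are routine.
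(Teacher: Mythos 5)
Your argument is correct, and it reaches all three cases \eqref{cond1}, \eqref{cond2}, \eqref{cond3} by a route genuinely different from the paper's. You run a classical Kaplan-type eigenfunction argument on off-centre balls $B(x_R,R/2)$ with $|x_R|\sim R$, so that the weight is of size $R^{\sigma}$ and the datum of size $R^{-\gamma}$ there; the nonlinear diffusion is handled through the principal eigenvalue $\lambda_R\sim R^{-2}$ together with Jensen's inequality for the concave power $m/p$ (this is where $p>m$ enters) and the reaction through Jensen for the convex power $p$, and the two requirements of your scheme --- blow-up time $T_R\sim R^{\gamma(p-1)-\sigma}\to0$ and ignition condition $R^{\sigma+2}\mu_R^{p-m}\gtrsim 1$ --- reproduce exactly the two thresholds $\sigma/(p-1)$ and $(\sigma+2)/(p-m)$, with the borderline case of the second giving the restriction $L>L_0$ (your $L_0$ is explicit in terms of the principal Dirichlet eigenvalue of the unit ball); your case discussion of which threshold is the smaller one according to $p\gtrless p_G$ matches the theorem, and the sign of the boundary term $-\int_{\partial B_R}u^m\partial_\nu\varphi_R\,dS\ge0$ is indeed favourable. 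The paper proceeds differently: it keeps origin-centred cutoffs but weights them by $|x|^{s}$ with $s$ large (which plays the same role as your translation, concentrating the integral near $|x|\sim\varrho$) and works with the quantity $\int\zeta_\varrho^{s}|x|^{s}u^{1-\theta}\,dx$, $\theta\in(0,\min\{1,m\})$, using the good gradient term produced by the negative power to absorb the cutoff errors via Young/H\"older; this yields, beyond non-existence, a quantitative upper bound on the existence time for \emph{arbitrary} data in terms of the weighted averages $Q(\varrho)$ (Step~2 of the paper), which is of independent interest, whereas your argument is more elementary and self-contained. Both proofs share the same level of formality concerning the notion of weak solution (differentiating the localized quantity and integrating by parts); in your version the use of the eigenfunction extended by zero, whose distributional Laplacian carries a non-negative singular part on $\partial B_R$, is the standard routine justification of that step.
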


Let us first observe the interesting fact that the exponent $p_G$ defined in~\eqref{crit.pG} plays a fundamental role also for the borderline behavior between the local existence and non-existence of solutions. We stress here that, in view of the self-similar solutions established in previously quoted references, the conditions~\eqref{cond1}, \eqref{cond2} and~\eqref{cond3} are optimal for non-existence in their respective range of $p$, as we shall discuss at the end of the paper, after the proof of Theorem \ref{th.nonex}.

\medskip

\noindent \textbf{Remarks.} 1. Note that~\eqref{cond2} and~\eqref{cond3} require $p_G>1$; that is, $m<1$.

2. In the particular case $m=1$, $p>1$, the non-existence result corresponding to~\eqref{cond1} is established in \cite{BK87}. A direct adaptation of the technique in \cite{BK87} has been employed in \cite[Section~3]{ILS24} in order to prove that, for $1<p<m$ and $\sigma>0$, \eqref{cond1} is still optimal for non-existence, complementing thus the present result with the range $p\in(1,m)$ that is not included here. Regarding the limiting case $p=m=1$, the celebrated paper \cite{BG84} established non-existence of any solution (without any condition on $u_0$) for the linear equation
$$
\partial_t u=\Delta u+\frac{C}{|x|^{2}}u,
$$
provided $C\geq C(N)=4/(N-2)^2$ (and $N\neq2$), the optimal constant being also the optimal constant of Hardy's inequality. Finally, we mention here that \cite[Proposition 1.7]{AT05} includes a result in the same spirit as the one in Theorem \ref{th.nonex}, valid for $m<1$, $p\in(m,m+2/N)$, but with the weight $(1+|x|)^{\sigma}$ replacing $|x|^{\sigma}$ in front of the source term.

\medskip

We are now in a position to give the proof of Theorem~\ref{th.nonex}, that will be the subject of the next section.

\section{Proof of Theorem~\ref{th.nonex}} \label{sec.proof}

The proof is based on the derivation of local estimates for some weighted integrals of the solutions with suitably chosen weights and relies on an improvement and a different interpretation of the non-weighted local estimates given in the proof of \cite[Lemma~2.1]{AT05}. Indeed, on the one hand, the aim of \cite{AT05} is actually to provide optimal temporal estimates near the blow-up time when it occurs in finite time and the local existence and non-existence of solutions are not investigated there. On the other hand, the use of appropriate weights in the estimates allows us to remove some constraints on the parameters required in the analysis performed in \cite{AT05}. We split the proof into three steps.  In the first one, we establish local estimates which are valid for arbitrary non-negative solution to~\eqref{eq1}-\eqref{ic}. The second step provides a connection between the existence time of the solution to~\eqref{eq1}-\eqref{ic} and the growth of weighted averages of the initial condition on balls. In the last step, we investigate the behavior of the latter when $u_0$ satisfies~\eqref{icond} and complete the proof.

\begin{proof}[Proof of Theorem~\ref{th.nonex}]
Let $\zeta\in C^1(\mathbb{R}^N)$ be a function such that
\begin{equation*}
	\zeta\equiv 1, \ {\rm on \ B(0,1)}, \quad \zeta\equiv0, \ {\rm on} \ \mathbb{R}^N\setminus B(0,2), \quad \|\nabla\zeta\|_{\infty}\leq 2,
\end{equation*}
and set
\begin{equation*}
	\zeta_{\varrho}(x):=\zeta\left(\frac{x}{\varrho}\right), \quad (x,\varrho)\in \mathbb{R}^N\times (0,\infty).
\end{equation*}

We next consider an arbitrary weak non-negative solution $u$ to the Cauchy problem~\eqref{eq1}-\eqref{ic} defined on some time interval $(0,T)$, $T\in (0,\infty)$, as well as $\theta\in(0,\min\{1,m\})$, and we introduce
\begin{equation}\label{def.y}
	y(t) := \int_{\mathbb{R}^N} \zeta_{\varrho}^s |x|^s u^{1-\theta}(t)\,dx, \quad t\in [0,T),
\end{equation}
where $s\geq 1$ is to be specified later, see~\eqref{lbs1} and~\eqref{lbs2}.

\medskip

\noindent \textbf{Step~1. General local estimates.} From Eq.~\eqref{eq1}, we compute
\begin{equation*}
\begin{split}
\frac{1}{1-\theta}\frac{dy}{dt} &= \int_{\mathbb{R}^N} \zeta_{\varrho}^s |x|^s u^{-\theta}(\Delta u^m+|x|^{\sigma}u^p)\,dx\\
&= m\theta \int_{\mathbb{R}^N} \zeta_{\varrho}^s |x|^s u^{m-\theta-2} |\nabla u|^2\,dx - ms \int_{\mathbb{R}^N} \zeta_{\varrho}^{s-1} |x|^s u^{m-1-\theta} \nabla\zeta_{\varrho}\cdot\nabla u\,dx\\
&\quad -ms \int_{\mathbb{R}^N} \zeta_{\varrho}^s |x|^{s-2} u^{m-1-\theta} x\cdot\nabla u\,dx +\int_{\mathbb{R}^N} \zeta_{\varrho}^s u^{p-\theta}|x|^{s+\sigma}\,dx.
\end{split}
\end{equation*}
We next apply Young's inequality in the second and third term in the right hand side of the previous expression to find that, for $\epsilon>0$,
\begin{equation*}
\begin{split}
\frac{1}{1-\theta}\frac{dy}{dt}&\geq m\theta\int_{\mathbb{R}^N}\zeta_{\varrho}^s |x|^s u^{m-\theta-2}|\nabla u|^2\,dx - m\epsilon \int_{\mathbb{R}^N} \zeta_{\varrho}^s |x|^s u^{m-\theta-2}|\nabla u|^2\, dx\\
&\quad - \frac{ms^2}{2\epsilon} \int_{\mathbb{R}^N} \left[\zeta_{\varrho}^{s-2} |x|^s u^{m-\theta} |\nabla\zeta_{\varrho}|^2 + \zeta_{\varrho}^s |x|^{s-2} u^{m-\theta} \right]\,dx\\
&\quad +\int_{\mathbb{R}^N} \zeta_{\varrho}^s u^{p-\theta}|x|^{s+\sigma}\, dx.
\end{split}
\end{equation*}
Choosing now $\epsilon=\theta$, we further obtain
\begin{equation}\label{est1}
\begin{split}
\frac{1}{1-\theta}\frac{dy}{dt}&
\geq -\frac{ms^2}{2\theta}\int_{\mathbb{R}^N} \zeta_{\varrho}^{s-2} |x|^s u^{m-\theta}|\nabla\zeta_{\varrho}|^2\,dx - \frac{ms^2}{2\theta} \int_{\mathbb{R}^N} \zeta_{\varrho}^{s} |x|^{s-2}  u^{m-\theta}\,dx\\
&\quad + \int_{\mathbb{R}^N} \zeta_{\varrho}^{s} |x|^{s+\sigma} u^{p-\theta}\,dx.
\end{split}
\end{equation}
We next employ H\"older's inequality in order to estimate the first two terms in the right hand side of \eqref{est1}. On the one hand, we can write that
\begin{equation}\label{est2}
\int_{\mathbb{R}^N}\zeta_{\varrho}^{s-2} |x|^s u^{m-\theta}|\nabla\zeta_{\varrho}|^2\,dx \leq \left( \int_{\mathbb{R}^N} \zeta_{\varrho}^{s}  |x|^{s+\sigma} u^{p-\theta}\,dx\right)^{(m-\theta)/(p-\theta)}\mathcal{J}_{1,1}^{(p-m)/(p-\theta)},
\end{equation}
with
\begin{equation}
\mathcal{J}_{1,1} := \int_{\mathbb{R}^N} \zeta_{\varrho}^{s-2(p-\theta)/(p-m)} |x|^{s-\sigma(m-\theta)/(p-m)}|\nabla\zeta_{\varrho}|^{2(p-\theta)/(p-m)}\,dx = \varrho^{\kappa_1} \mathcal{I}_{1,1}, \label{J1}
\end{equation}
and
\begin{align}
	\kappa_1 & := s + N - \frac{2(p-\theta)}{p-m} - \frac{\sigma(m-\theta)}{p-m}, \label{kap1} \\
	\mathcal{I}_{1,1} & := \int_{\mathbb{R}^N} \zeta^{s-2(p-\theta)/(p-m)} |x|^{s - \sigma(m-\theta)/(p-m)}  |\nabla\zeta|^{2(p-\theta)/(p-m)}\,dx. \label{I1}
\end{align}
On the other hand, we obtain in a similar way that
\begin{equation}\label{est3}
\int_{\mathbb{R}^N} \zeta_{\varrho}^{s} |x|^{s-2} u^{m-\theta}\,dx \leq \left( \int_{\mathbb{R}^N} \zeta_{\varrho}^{s}  |x|^{s+\sigma} u^{p-\theta}\,dx \right)^{(m-\theta)/(p-\theta)} \mathcal{J}_{1,2}^{(p-m)/(p-\theta)},
\end{equation}
with
\begin{equation}
\mathcal{J}_{1,2} := \int_{\mathbb{R}^N} \zeta_{\varrho}^{s} |x|^{s-2(p-\theta)/(p-m)-\sigma(m-\theta)/(p-m)}\,dx = \varrho^{\kappa_1} \mathcal{I}_{1,2} \label{J2}
\end{equation}
and
\begin{equation}
	\mathcal{I}_{1,2} := \int_{\mathbb{R}^N} \zeta^{s} |x|^{s-2(p-\theta)/(p-m)-\sigma(m-\theta)/(p-m)}\,dx, \label{I2}
\end{equation}
the exponent $\kappa_1$ being given by~\eqref{kap1}. Observe that $\mathcal{I}_{1,1}$ and $\mathcal{I}_{1,2}$ are finite whenever
\begin{equation}\label{condJ2}
\kappa_1 = s+N-\frac{2(p-\theta)}{p-m}-\frac{\sigma(m-\theta)}{p-m}>0,
\end{equation}
which is fulfilled for $s$ sufficiently large (depending on the parameters $N$, $m$, $p$ and $\sigma$). Gathering the estimates~\eqref{est1}, \eqref{est2} and~\eqref{est3} and using that $p-m<p-\theta$, we have shown that
\begin{equation*}
\begin{split}
\frac{dy}{dt}& \geq-(1-\theta)\frac{ms^2}{2\theta}\mathcal{J}_1^{(p-m)/(p-\theta)} \left( \int_{\mathbb{R}^N} \zeta_{\varrho}^{s} |x|^{s+\sigma} u^{p-\theta}\,dx\right)^{(m-\theta)/(p-\theta)}\\
& \quad + (1-\theta) \int_{\mathbb{R}^N} \zeta_{\varrho}^s |x|^{s+\sigma}u^{p-\theta}\,dx,
\end{split}
\end{equation*}
with $\mathcal{J}_1 := \mathcal{J}_{1,1} + \mathcal{J}_{1,2}$. We further infer from an application of Young's inequality in the above estimate that
\begin{equation}\label{est4}
	\frac{dy}{dt} \geq \frac{1-\theta}{2} \int_{\mathbb{R}^N} \zeta_{\varrho}^{s} |x|^{s+\sigma} u^{p-\theta}\, dx - c_2 \mathcal{J}_{1}
\end{equation}
for some constant $c_2>0$ (depending on $N$, $m$, $p$, $\sigma$, $\theta$ and $s$). To estimate from below the first term on the right hand side of~\eqref{est4} in terms of $y$, we  apply again H\"older's inequality to obtain that
\begin{equation*}
y \leq  \left( \int_{\mathbb{R}^N} \zeta_{\varrho}^{s} |x|^{s+\sigma} u^{p-\theta}\,dx\right)^{(1-\theta)/(p-\theta)} \mathcal{J}_2^{(p-1)/(p-\theta)},
\end{equation*}
or, equivalently,
\begin{equation}\label{est5}
\int_{\mathbb{R}^N} \zeta_{\varrho}^{s} |x|^{s+\sigma} u^{p-\theta}\,dx \geq y^{(p-\theta)/(1-\theta)}\mathcal{J}_2^{-(p-1)/(1-\theta)},
\end{equation}
where
\begin{equation}\label{J3}
\mathcal{J}_2 := \int_{\mathbb{R}^N} \zeta_{\varrho}^{s} |x|^{s-\sigma(1-\theta)/(p-1)}\, dx = \varrho^{\kappa_2} \mathcal{I}_2,
\end{equation}
with
\begin{align}
	\kappa_2 & := s+N-\frac{\sigma(1-\theta)}{p-1}, \\
	\mathcal{I}_2 & := \int_{\mathbb{R}^N} \zeta^{s} |x|^{s - \sigma (1-\theta)/(p-1)}\, dx. \label{I3}
\end{align}
We note that $\mathcal{J}_2$ is finite if
\begin{equation}\label{condJ3}
\kappa_2= s+N-\frac{\sigma(1-\theta)}{p-1}>0,
\end{equation}
which is satisfied for sufficiently large $s$. Combining the estimates~\eqref{est4} and~\eqref{est5} leads us to
\begin{equation}\label{est6}
\frac{dy}{dt} \geq \frac{1-\theta}{2} \mathcal{J}_2^{-(p-1)/(1-\theta)} y^{(p-\theta)/(1-\theta)} - c_2 \mathcal{J}_{1}.
\end{equation}

Let next $K>1$ and set $c_1 :=(1-\theta)/2$. At this stage, the following alternative is in force: either
\begin{equation}\label{alter1}
y(0)\leq\left[\frac{Kc_2}{c_1}\right]^{(1-\theta)/(p-\theta)} \mathcal{J}_2^{(p-1)/(p-\theta)} \mathcal{J}_{1}^{(1-\theta)/(p-\theta)},
\end{equation}
or
\begin{equation}\label{alter2}
y(0)>\left[\frac{Kc_2}{c_1}\right]^{(1-\theta)/(p-\theta)} \mathcal{J}_2^{(p-1)/(p-\theta)} \mathcal{J}_{1}^{(1-\theta)/(p-\theta)}.
\end{equation}
In the latter case, we deduce from \eqref{est6} that
$$
\frac{dy}{dt}(0)\geq0
$$
and we readily conclude from~\eqref{est6} that the alternative~\eqref{alter2} remains in force for any $t\in (0,T)$. Consequently, the inequality~\eqref{alter2}, together with~\eqref{est6}, gives
$$
\frac{dy}{dt}(t) \geq \frac{(K-1)c_1}{K} \mathcal{J}_2^{-(p-1)/(1-\theta)} y(t)^{(p-\theta)/(1-\theta)}, \quad t\in (0,T),
$$
which leads to the following differential inequality
\begin{equation}\label{est7}
\frac{d}{dt}\left[y(t)^{-(p-1)/(1-\theta)}\right]\leq-\frac{(p-1)(K-1)}{2K}\mathcal{J}_2^{-(p-1)/(1-\theta)}, \quad t\in (0,T).
\end{equation}
Integrating~\eqref{est7} over $(0,t)$ gives
\begin{equation*}
	0 \le y(t)^{-(p-1)/(1-\theta)} \leq y(0)^{-(p-1)/(1-\theta)} - \frac{(p-1)(K-1)}{2K} \mathcal{J}_2^{-(p-1)/(1-\theta)} t,
\end{equation*}
from which we deduce that we necessarily have
\begin{equation*}
	T \leq \frac{2K}{(p-1)(K-1)} \left( \frac{\mathcal{J}_2}{y(0)} \right)^{(p-1)/(1-\theta)}.
\end{equation*}
Recalling the lower bound~\eqref{alter2} on $y(0)$ gives
\begin{equation*}
	y(0)^{-(p-1)/(1-\theta)} < \left( \frac{c_1}{Kc_2} \right)^{(p-1)/(p-\theta)} \mathcal{J}_1^{-(p-1)/(p-\theta)} \mathcal{J}_2^{-(p-1)^2/[(p-\theta)(1-\theta)]},
\end{equation*}
which provides the following upper bound on $T$
\begin{equation}
	T \le c_3 \frac{K^{(1-\theta)/(p-\theta)}}{K-1} \left( \frac{\mathcal{J}_2}{\mathcal{J}_1} \right)^{(p-1)/(p-\theta)} \label{tmax}
\end{equation}
for some constant $c_3>0$ (depending on $N$, $m$, $p$, $\sigma$, $\theta$ and $s$). Setting
\begin{equation*}
	\mathcal{I}_1:=\mathcal{I}_{1,1}+\mathcal{I}_{1,2}, \quad \mathcal{I} := \frac{c_2}{c_1} \mathcal{I}_1 \mathcal{I}_2^{(p-1)/(1-\theta)}
\end{equation*}
and using~\eqref{J1}, \eqref{I1}, \eqref{J2}, \eqref{I2}, \eqref{J3} and~\eqref{I3} to express~\eqref{alter2} and~\eqref{tmax} in terms of $\varrho$ and $K$, we have shown that, if $y(0)$ satisfies
\begin{equation}
	y(0) > (\mathcal{I} K)^{(1-\theta)/(p-\theta)} \varrho^{s+N-[(\sigma+2)(1-\theta)]/(p-m)}, \label{alter2b}
\end{equation}
then
\begin{equation}
	T \le c_3 \left( \frac{\mathcal{I}_2}{\mathcal{I}_1} \right)^{(p-1)/(p-\theta)} \frac{K^{(1-\theta)/(p-\theta)}}{K-1} \varrho^{2(p-p_G)/(p-m)}. \label{tmaxb}
\end{equation}
Let us observe that, up to now, all the previously derived estimates do not take into account any particular properties of the solution $u$ or of its initial condition $u_0$. They are thus valid for any solution $u$ to~\eqref{eq1}-\eqref{ic}.

From now on, we assume that $s\ge 1$ satisfies~\eqref{condJ2} and~\eqref{condJ3}; that is,
\begin{equation}
	s > - N + \max\left\{ 2 + \frac{(\sigma+2)(m-\theta)}{p-m} , \frac{\sigma(1-\theta)}{p-1}\right\}. \label{lbs1}
\end{equation}

\medskip

\noindent\textbf{Step~2. Upper bound on the existence time.} Introducing
\begin{equation*}
	Q(\varrho) := \frac{1}{\varrho^{N+s}} \int_{B(0,\varrho)} |x|^s u_0^{1-\theta}(x)\, dx, \quad \varrho\in (1,\infty),
\end{equation*}
we now assume that there are $\varrho_\star\ge 1$ and $\varepsilon_\star>0$ such that
\begin{equation}
	\varrho^{[(\sigma+2)(p-\theta)]/(p-m)} Q(\varrho)^{(p-\theta)/(1-\theta)} > (1+\varepsilon_\star) \mathcal{I}, \quad \varrho\ge \varrho_\star. \label{x1}
\end{equation}
Then, for $\varrho\ge\varrho_\star$,
\begin{equation}
	K(\varrho) := \frac{1}{(1+\varepsilon_\star)\mathcal{I}} \varrho^{[(\sigma+2)(p-\theta)]/(p-m)} Q(\varrho)^{(p-\theta)/(1-\theta)} > 1, \label{Kr}
\end{equation}
and we readily infer from~\eqref{x1}, \eqref{Kr} and the properties of $\zeta$ that
\begin{align*}
	y(0) & \ge \varrho^{N+s} Q(\varrho) = (1+\varepsilon_\star)^{(1-\theta)/(p-\theta)}  [\mathcal{I} K(\varrho)]^{(1-\theta)/(p-\theta)} \varrho^{N+s - [(\sigma+2)(1-\theta)]/(p-m)} \\
	& > [\mathcal{I} K(\varrho)]^{(1-\theta)/(p-\theta)} \varrho^{N+s - [(\sigma+2)(1-\theta)]/(p-m)}.
\end{align*}
The condition~\eqref{alter2b} is therefore satisfied and we infer from~\eqref{tmaxb} that
\begin{equation*}
	T \le c_3 \left( \frac{\mathcal{I}_2}{\mathcal{I}_1} \right)^{(p-1)/(p-\theta)} \frac{K(\varrho)^{(1-\theta)/(p-\theta)}}{K(\varrho)-1} \varrho^{2(p-p_G)/(p-m)}, \quad \varrho\ge\varrho_\star.
\end{equation*}
Hence,
\begin{equation}
	T \le c_3 \left( \frac{\mathcal{I}_2}{\mathcal{I}_1} \right)^{(p-1)/(p-\theta)} \liminf_{\varrho\to\infty} \left[ \frac{K(\varrho)^{(1-\theta)/(p-\theta)}}{K(\varrho)-1} \varrho^{2(p-p_G)/(p-m)} \right]. \label{x2}
\end{equation}

\medskip

\noindent \textbf{Step~3. Conclusion for $u_0$ satisfying \eqref{icond}.} In the remaining part of the proof, we shall check that the assumption~\eqref{icond} on $u_0$ ensures that the condition~\eqref{x1} is satisfied for appropriate values of $\gamma>0$ and compute the right hand side of~\eqref{x2}.

Owing to~\eqref{icond}, there exists $\varrho_0>0$ such that
\begin{equation*}
	u_0(x)\geq\frac{L}{2}|x|^{-\gamma}, \quad |x|\geq\varrho_0,
\end{equation*}
and then, for $\varrho\ge 2\varrho_0$,
\begin{equation*}
\begin{split}
Q(\varrho)& \geq \frac{1}{\varrho^{N+s}} \int_{B(0,\varrho)} |x|^{s} u_0^{1-\theta}(x)\,dx \geq \frac{1}{\varrho^{N+s}}  \int_{B(0,\varrho)\setminus B(0,\varrho_0)} |x|^{s} u_0^{1-\theta}(x)\,dx \\
&\geq \frac{c L^{1-\theta}}{\varrho^{N+s}} \left( \varrho^{N+s-\gamma(1-\theta)} - \varrho_0^{N+ s-\gamma(1-\theta)} \right)\\
& > c_4 L^{1-\theta} \varrho^{-\gamma(1-\theta)},
\end{split}
\end{equation*}
provided
\begin{equation}
	s> - N + \gamma(1-\theta). \label{lbs2}
\end{equation}
Therefore,
\begin{equation*}
	\varrho^{[(\sigma+2)(p-\theta)]/(p-m)} Q(\varrho)^{(p-\theta)/(1-\theta)} > c_4^{(p-\theta)/(1-\theta)} L^{p-\theta} \varrho^{[(\sigma+2)/(p-m)-\gamma](p-\theta)},
\end{equation*}
from which we deduce that~\eqref{x1} is satisfied when either
\begin{subequations}\label{cond4}
\begin{equation}
	\gamma < \frac{\sigma+2}{p-m}	\label{cond4a}
\end{equation}
or
\begin{equation}
	\gamma = \frac{\sigma+2}{p-m} \;\text{ and }\; L>\frac{\mathcal{I}^{1/(p-\theta)}}{c_4^{1/(1-\theta)}}, \label{cond4b}
\end{equation}
\end{subequations}
with
\begin{equation*}
	\varrho_\star^{(\sigma+2)/(p-m)-\gamma} = \max\left\{ (2\varrho_0)^{(\sigma+2)/(p-m)-\gamma} , \frac{(2\mathcal{I})^{1/(p-\theta)}}{c_4^{1/(1-\theta)}L}\right\} \;\text{ and }\; \varepsilon_\star = 1
\end{equation*}
in the first case and
\begin{equation*}
	\varrho_\star = 2\varrho_0 \;\text{ and }\; \varepsilon_\star = \frac{c_4^{(p-\theta)/(1-\theta)}L^{p-\theta}}{\mathcal{I}} - 1
\end{equation*}
in the second case. Moreover, recalling~\eqref{Kr},
\begin{equation}
	K(\varrho) = \frac{c_4^{(p-\theta)/(1-\theta)}L^{p-\theta}}{(1+\varepsilon_\star)\mathcal{I}} \varrho^{[(\sigma+2)/(p-m)-\gamma](p-\theta)}, \quad \varrho\ge \varrho_\star. \label{Kr2}
\end{equation}

Now, assuming that~\eqref{cond4} is in force, we infer from~\eqref{x2} that $T$ satisfies
\begin{equation}
T \le c_3 \left( \frac{\mathcal{I}_2}{\mathcal{I}_1} \right)^{(p-1)/(p-\theta)} \liminf_{\varrho\to\infty} \left[ \frac{K(\varrho)^{(1-\theta)/(p-\theta)}}{K(\varrho)-1} \varrho^{2(p-p_G)/(p-m)} \right] \label{tmaxc}
\end{equation}
with $K(\varrho)$ given by~\eqref{Kr2}. We thus have the following alternative:

\medskip

$\bullet$ either $\gamma$ satisfies~\eqref{cond4a} and, since $K(\varrho)\longrightarrow\infty$ as $\varrho\to\infty$, we obtain
\begin{align*}
	\frac{K(\varrho)^{(1-\theta)/(p-\theta)}}{K(\varrho)-1} \varrho^{2(p-p_G)/(p-m)} & \sim K(\varrho)^{-(p-1)/(p-\theta)} \varrho^{2(p-p_G)/(p-m)} \\
	& \sim \frac{[(1+\varepsilon_\star)\mathcal{I}]^{(p-1)/(p-\theta)}}{c_4^{(p-1)/(1-\theta)} L^{p-1}} \varrho^{\gamma (p-1) - \sigma}
\end{align*}
as $\varrho\to\infty$, since
\begin{equation*}
	-(p-1) \left( \frac{\sigma+2}{p-m} - \gamma \right) + \frac{2(p-p_G)}{p-m} = \gamma(p-1) - \sigma.
\end{equation*}
Consequently, by letting $\varrho\to\infty$ in~\eqref{tmaxc}, we deduce that $T=0$, provided
\begin{equation}\label{cond5}
	\gamma<\frac{\sigma}{p-1}.
\end{equation}
Putting together \eqref{cond4a} and \eqref{cond5}, we thus obtain a non-existence result whenever
\begin{equation*}
	\gamma<\min\left\{\frac{\sigma+2}{p-m},\frac{\sigma}{p-1}\right\}.
\end{equation*}
Noticing that
\begin{equation*}
	\frac{\sigma}{p-1}-\frac{\sigma+2}{p-m}=\frac{\sigma(1-m)-2(p-1)}{(p-1)(p-m)}=\frac{2(p_G-p)}{(p-1)(p-m)},
\end{equation*}
we have thus completed the proof for $p>p_G$ and $\gamma<\sigma/(p-1)$, see~\eqref{cond1}, and $p<p_G$ and $\gamma<(\sigma+2)/(p-m)$, see~\eqref{cond2}.

\medskip

$\bullet$ or $(\gamma,L)$ satisfies~\eqref{cond4b}. In that case, $\varrho\mapsto K(\varrho)$ belongs to $L^\infty((\varrho_\star,\infty))$ according to~\eqref{Kr2} and it readily follows from~\eqref{tmaxc} by letting $\varrho\to\infty$ that $T=0$ when $p<p_G$, which completes the proof of \eqref{cond3} and, thus, that of Theorem~\ref{th.nonex}.
\end{proof}

\section{Discussion}\label{sec.discussion}

Theorem~\ref{th.nonex} provides sufficient conditions on the initial data $u_0$ leading to non-existence of solutions to the Cauchy problem~\eqref{eq1}-\eqref{ic}. A natural question is then whether these conditions are also necessary. Although a general theory of local existence in optimal spaces for~\eqref{eq1}-\eqref{ic} with $\sigma>0$ is yet to be done, it has been proved in some ranges of exponents that the conditions in Theorem~\ref{th.nonex} are optimal, while they are expected to be optimal as well in the remaining ranges, though a rigorous proof is not yet available.

\medskip

\noindent (a) \emph{The slow diffusion range $m>1$.} As mentioned in the Introduction, when $m=1$, optimality stems from the paper \cite[Remark~3]{BK87} in which the non-existence in Theorem~\ref{th.nonex} is proved by a different technique in the semilinear case. For $m>1$, which automatically implies $p>1>p_G$, the local existence of solutions for data behaving as in~\eqref{icond} but with $\gamma\geq\sigma/(p-1)$ is proved in \cite{AdB91}, but for the equation
\begin{equation}\label{eq2}
\partial_t u=\Delta u^m+(1+|x|)^{\sigma}u^p.
\end{equation}
Since, for $\sigma>0$, any solution to Eq.~\eqref{eq2} is a supersolution to Eq.~\eqref{eq1}, is not difficult to establish local existence of solutions to Eq.~\eqref{eq1} whenever $u_0$ satisfies~\eqref{icond} with any $\gamma\geq\sigma/(p-1)$ and any $L\in(0,\infty)$, by an approximating process, proving the optimality of the condition \eqref{cond1} for non-existence. We omit the details of the approximation here, but it follows similar lines as the approximation in the proof of \cite[Theorem 2.2]{IL25}. Let us also mention here that our analysis requires $p>m$, but the same non-existence result as in \eqref{cond1} for the range $m>1$ and $1<p<m$ (not covered in the present work) and its optimality are established in \cite{ILS24} by adapting the arguments of \cite{AdB91, BK87}. Even more, the existence of (classical) solutions to Eq.~\eqref{eq1} in self-similar form and with the borderline local behavior
\begin{equation}\label{lim.decay}
\lim\limits_{|x|\to\infty}|x|^{\sigma/(p-1)}u(x,t)=\left(\frac{1}{p-1}\right)^{1/(p-1)}
\end{equation}
is shown in \cite{ILS24b}, still when $1<p<m$.

\medskip

\noindent (b) \emph{The fast diffusion range $m<1$.} For the fast diffusion range $0<m<1$, both cases $p<p_G$ and $p>p_G$ are allowed. In the former, the existence of self-similar solutions to Eq.~\eqref{eq1} with the precise behavior
\begin{equation}\label{dec.slow}
u(t,x)\sim L|x|^{-(\sigma+2)/(p-m)}, \quad {\rm as} \ |x|\to\infty
\end{equation}
is established in \cite{IMS25} for exponents $(m,p)$ ranging in a subset of $(0,1)\times (1,p_G)$. There are still complementary ranges in which no self-similar solution with the previous decay at infinity exists. However, even in those cases, the local behavior~\eqref{dec.slow} is taken on ``incomplete" self-similar solutions (not leading to a full solution because of their non-compatible behavior as $|x|\to 0$), a fact suggesting that the optimality of the conditions~\eqref{cond2} and~\eqref{cond3} for non-existence is expected to remain true in these ranges as well. Finally, in the range $0<m<1$ and $p>p_G$, it follows from the local analysis in \cite[Section~4]{IS26} that, for $m\geq m_c=(N-2)_+/N$, there are self-similar functions with finite time blow-up presenting the behavior~\eqref{lim.decay} but not being a full solution to Eq.~\eqref{eq1}, again because of their behavior as $|x|\to0$. Once more, the above local behavior suggests that the optimality of the condition~\eqref{cond1} for non-existence is expected to remain true in the range $0<m<1$ and $p>p_G$.

\medskip

\noindent (c) \emph{A singular stationary solution.} We complete the previous discussion by noticing that, for
$$
p>p_c:=\frac{m(\sigma+N)}{N-2}, \quad N\geq3,
$$
there exists a singular stationary solution
$$
S(x)=A|x|^{-(\sigma+2)/(p-m)}, \quad A=\left[\frac{m(\sigma+2)(N-2)(p-p_c)}{(p-m)^2}\right]^{1/(p-m)}.
$$
Since its behavior as $|x|\to\infty$ is the borderline behavior in order to prevent non-existence only when $p\leq p_G$, and
$$
p_G-p_c=1+\frac{\sigma(1-m)}{2}-\frac{m(\sigma+N)}{N-2}=\frac{N(\sigma+2)}{2(N-2)}(m_c-m), \quad m_c=\frac{N-2}{N},
$$
we find that $p\in(p_c,p_G]$ is possible if and only if $m\in(0,m_c)$. In this range, the singular stationary solution $S$ suggests again the optimality of the conditions~\eqref{cond2} and~\eqref{cond3} for non-existence of solutions to Eq.~\eqref{eq1}.

\bigskip

\section*{Acknowledgements} This work is partially supported by the Spanish projects PID2024-160967NB-I00 and PID2020-115273GB-I00. Part of this work has been developed during visits of R. G. I. to Laboratoire de Math\'ematiques LAMA, Universit\'e de Savoie Mont Blanc, and he thanks this institution for hospitality and support.

\bigskip

\noindent \textbf{Data availability} Our manuscript has no associated data.

\bigskip

\noindent \textbf{Conflict of interest} The authors declare that there is no conflict of interest.

\bibliographystyle{plain}

\end{document}